\definecolor{webgreen}{rgb}{0,.5,0}
\definecolor{webbrown}{rgb}{.6,0,0}
\newcommand{\seqnum}[1]{\href{http://oeis.org/#1}{\underline{#1}}}
\begin{document}

%\begin{center}
%\epsfxsize=4in
%\leavevmode\epsffile{logo129.eps}
%\end{center}

\theoremstyle{plain}
\newtheorem{theorem}{Theorem}
\newtheorem{corollary}[theorem]{Corollary}
\newtheorem{lemma}[theorem]{Lemma}
\theoremstyle{remark}
\newtheorem{remark}[theorem]{Remark}

\newcommand{\lrf}[1]{\left\lfloor #1\right\rfloor}

\begin{center}
\vskip 1cm{\LARGE\bf Lucas-Euler Relations using Balancing and Lucas-Balancing Polynomials \\
\vskip .11in }

\vskip 1cm

{\large  Robert Frontczak\footnote{Statements and conclusions made in this article by R. Frontczak are entirely those of the author. They do not necessarily reflect the views of LBBW.
} \\
Landesbank Baden-W\"urttemberg (LBBW) \\ Stuttgart,  Germany \\
\href{mailto:robert.frontczak@lbbw.de}{\tt robert.frontczak@lbbw.de}

\vskip 0.2 in

Taras Goy  \\ 
%Faculty of Mathematics and Computer Science \\
Faculty of Mathematics and Computer Science\\
Vasyl Stefanyk Precarpathian National University\\
Ivano-Frankivsk, Ukraine\\
\href{mailto:taras.goy@pnu.edu.ua}{\tt taras.goy@pnu.edu.ua}}
\end{center}

\vskip .2 in

\begin{abstract}
We establish some new combinatorial identities involving Euler polynomials and balancing (Lucas-balancing) polynomials.
The derivations use elementary techniques and are based on functional equations for the respective generating functions. 
From these polynomial relations, we deduce interesting identities with Fibonacci and Lucas numbers, and Euler numbers.
The results must be regarded as companion results to some Fibonacci-Bernoulli identities, which we derived in our previous paper. 

\end{abstract}

{\it Mathematics Subject Classification}: 11B37, 11B65, 05A15. \\
{\it Keywords:} Euler polynomials and numbers, Bernoulli numbers, Balancing polynomials and numbers,
Fibonacci numbers, Lucas numbers, generating function.

\section{Motivation and preliminaries}

In 1975, Byrd \cite{Byrd} derived the following identity relating Lucas numbers to Euler numbers:
%\begin{equation} \label{byrd1}
%\sum_{k=0}^{n} \binom{n}{k}\!\left(\frac{5}{4}\right)^{\frac{n-k}{2}} \! L_{k} E_{n-k} = 2^{1-n}
%\end{equation}
%or, equivalently,
\begin{equation} \label{byrd1}
\sum_{k=0}^{\lfloor{n/2}\rfloor} \binom{n}{2k} \left(\frac{5}{4}\right)^{k} L_{n-2k} E_{2k} = 2^{1-n}.
\end{equation}
In \cite{Wang}, Wang and Zhang obtained a more general result valid for $j\geq 1$ as follows
\begin{equation} \label{wang}
\sum_{k=0}^{\lfloor{n/2}\rfloor} \binom{n}{2k} \left(\frac{5}{4}\right)^{k}\! F_j^{2k}L_{j(n-2k)} E_{2k} = 2^{1-n} L_j^n.
\end{equation}
Castellanos \cite{Castellanos} found 
\begin{equation} \label{castellanos}
\sum_{k=0}^{n} \binom{2n}{2k}2^{-2k-1} L_{2(n-k)j}L_{j}^{2k} E_{2k} = \Big(\frac{5}{4}\Big)^n F_j^{2n}, 
\end{equation}
 which expresses even powers of Fibonacci numbers in terms of Lucas and Euler numbers.

Here, as usual, Fibonacci and Lucas numbers satisfy the recurrence $u_{n} = u_{n-1} + u_{n-2}$, $n\geq 2$, 
with initial conditions $F_0 = 0$, $F_1 = 1$ and $L_0 = 2$, $L_1 = 1$, respectively, 
whereas Euler numbers $(E_n)_{n\geq 0}$ are given by the power series
\begin{displaymath}
\sum_{n=0}^\infty E_n \frac{z^n}{n!} =\frac{1}{\cosh{z}}.
\end{displaymath} 

Fibonacci and Lucas numbers are entries \seqnum{A000045} and \seqnum{A000032} in the On-Line Encyclopedia of Integer Sequences \cite{Sloane}, respectively.

The Lucas-Euler pair may be regarded as the twin of the Fibonacci-Bernoulli pair. 
In the last years, there has been a growing interest in deriving new relations for these two pairs of sequences. 
Zhang and Ma \cite{Zhang} proved a relation between Fibonacci polynomials and Bernoulli numbers $(B_n)_{n\geq 0}$ defined by
\begin{displaymath}
\sum_{n=0}^\infty B_n \frac{z^n}{n!} = \frac{z}{e^z-1}.
\end{displaymath} 
The following identity is a special case of their result:
\begin{equation*} \label{zhama1}
\sum_{k=0}^{n} \binom{n}{k} 5^{\frac{n-k}{2}} F_{k} B_{n-k} =n\beta^{n-1},
\end{equation*}
where $\beta=(1-\sqrt{5})/2$, or, equivalently, 
\begin{equation} \label{zhama1}
\sum_{k=0}^{\lfloor n/2\rfloor} \binom{n}{2k} 5^k F_{n-2k} B_{2k} = \frac{nL_{n-1}}{2}.
\end{equation}
See also \cite{Ozdemir,Wang,Young,Zhang-Guo} for other results in this direction. 
Recently, Frontczak \cite{RF-JIS}, Frontczak and Goy \cite{RF-TG}, and Frontczak and Tomovski \cite{RF-Tomovski} proved
some generalizations of existing results. For instance, from \cite{RF-TG} we have
\begin{equation}\label{frogoy1}
\sum_{k=0}^n {n\choose k}  (\sqrt{5}F_j)^{n-k} F_{jk}B_{n-k} = n F_j \beta^{j(n-1)},
%= \frac{n}{2}F_{j} \big (L_{j(n-1)} - \sqrt{5}F_{j(n-1)}\big ),
\end{equation}
which holds for all $j\geq 1$ and generalizes \eqref{zhama1} to an arithmetic progression, and
\begin{equation}\label{frogoy2}
\sum_{k=0}^{\lfloor n/2 \rfloor} {n\choose 2k}(20^k-5^k)F_{2j}^{2k}L_{2j(n-2k)}B_{2k} = \frac{5n}{2} F_{2j}F_{2j(n-1)}.
\end{equation}
Note, since $B_{2n+1}=0$ for $n\geq1$, from \eqref{frogoy1} we get  Kelinsky's formula \cite{Kelisky}
\begin{equation*}
\sum_{k=0}^{\lfloor{n}/{2}\rfloor} {n\choose 2k} 5^k F_j^{2k} F_{j(n-2k)}B_{2k} = \frac{n}{2} F_j L_{j(n-1)}.
\end{equation*}

In this paper, we present new identities linking Lucas numbers to Euler numbers (polynomials). 
The results stated are polynomial generalizations of \eqref{byrd1} and are complements of the recent discoveries 
from \cite{RF-JIS} and \cite{RF-TG}.

 Throughout the paper, we will work with different kind of polynomials of a complex variable $x$: 
Euler polynomials $(E_n(x))_{n\geq 0}$, Bernoulli polynomials $(B_n(x))_{n\geq 0}$, balancing polynomials $(B_n^*(x))_{n\geq 0}$, 
and Lucas-balancing polynomials $(C_n(x))_{n\geq 0}$. 

Euler and Bernoulli polynomials are famous mathematical objects and are fairly well understood. 
They are defined by \cite[Chapter 24]{Dilcher}
\begin{equation}\label{def_b}
H(x,z) = \sum_{n=0}^\infty B_n(x) \frac{z^n}{n!} = \frac{z e^{xz}}{e^z - 1} \qquad (|z|<2\pi) 
\end{equation} 
and
\begin{equation}\label{def_e}
I(x,z) = \sum_{n=0}^\infty E_n(x) \frac{z^n}{n!} = \frac{2e^{xz}}{e^z + 1} \qquad (|z|<\pi). 
\end{equation}
 
The numbers $B_n(0)=B_n$ are the famous Bernoulli numbers. Bernoulli numbers are rational numbers starting 
with $B_0=1, B_1=-1/2, B_2=1/6, B_4=-1/30$ and so on. Also, as already mentioned, $B_{2n+1}=0$ for $n\geq 1$. 
Euler numbers $E_n$ are obtained from $I(1/2,2z)$ that is 
\begin{equation}
E_n=2^n E_n(1/2).\label{En(1/2)}
\end{equation} 
In contrast to Bernoulli numbers, Euler numbers are integers where $E_0=1, E_2=-1, E_4=5$ and $E_{2n+1}=0$ for $n\geq 0$. 
Explicit formulas for the polynomials are
\begin{displaymath}
B_n(x) = \sum_{k=0}^n {n\choose k} B_{k}x^{n-k} \quad \mbox{and} \quad 
E_n(x) = \sum_{k=0}^n {n\choose k} \frac{E_{k}}{2^k}\Big ( x-\frac{1}{2}\Big )^{n-k}.
\end{displaymath}

Euler polynomials can be expressed in terms of Bernoulli polynomials via
\begin{displaymath}
E_{n}(x) = \frac{2}{n+1}\Big( B_{n+1} (x) - 2^{n+1} B_{n+1}\Big(\frac{x}{2}\Big)\Big).
\end{displaymath}
Particularly, 
\begin{equation}\label{E_n(0)}
E_n(0) = \frac{2\,(1-2^{n+1})}{n+1} B_{n+1}.
\end{equation}

Balancing polynomials are of younger age and are introduced in the next section.

\section{Balancing and Lucas-Balancing Polynomials}

Balancing polynomials $B_n^*(x)$ and Lucas-balancing polynomials $C_n(x)$ are generalizations of 
balancing and Lucas-balancing numbers \cite{RF-AMS}. These polynomials satisfy the recurrence
$w_n(x) = 6x w_{n-1}(x) - w_{n-2}(x)$, $\geq 2$, but with the respectively initial conditions 
$B_0^*(x)=0$, $B_1^*(x)=1$ and $C_0(x)=1$, $C_1(x)=3x$. The explicit formulas for these polynomials are
\begin{equation*}
B^*_n(x) = \frac{\lambda^n (x) - \lambda^{-n} (x)}{2\sqrt{9x^2-1}}
\quad \mbox{and} \quad 
C_n(x) = \frac{\lambda^n (x) + \lambda^{-n} (x)}{2}, 
\end{equation*}
where $\lambda (x)=3x + \sqrt{9x^2-1}$. Consult the papers \cite{RF-AMS,RF-TG-AMI,Kim,Meng} for more information about these polynomials. The numbers $B_n^*(1)=B_n^*$ and $C_n(1)=C_n$ are called balancing and Lucas-balancing numbers, respectively. These numbers are indexed in \cite{Sloane} under entries \seqnum{A001109} and \seqnum{A001541}.

Balancing and Lucas-balancing polynomials possess interesting properties. They are related to Chebyshev polynomials by simple scaling \cite[Lemma 2.1]{RF-AMS}. The exponential generating functions for balancing and Lucas-balancing polynomials are derived in \cite{RF-AMS,RF-TG-AMI}. Here, however, we will only need the results from \cite{RF-TG-AMI}: Let $b_1(x,z)$ and $b_2(x,z)$ be the exponential generating functions 
of odd and even indexed balancing polynomials, respectively. Then
\begin{align} \label{genf_b1}
 b_1(x,z) &= \sum_{n=0}^\infty B_{2n+1}^*(x) \frac{z^n}{n!} \nonumber \\
& = \frac{e^{(18x^2-1)z}}{\sqrt{9x^2-1}}\bigl( 3x \sinh (6x \sqrt{9x^2-1}z) + \sqrt{9x^2-1} \cosh (6x \sqrt{9x^2-1}z)\bigr)
\end{align}
and
\begin{equation} \label{genf_b2}
b_2(x,z) = \sum_{n=0}^\infty B_{2n}^*(x) \frac{z^n}{n!} = \frac{e^{(18x^2-1)z}}{\sqrt{9x^2-1}}\sinh (6x \sqrt{9x^2-1}z).
\end{equation}
Similarly, the exponential generating functions for Lucas-balancing polynomials are found to be
\begin{align} \label{genf_c1}
 c_1(x,z) &= \sum_{n=0}^\infty C_{2n+1}(x) \frac{z^n}{n!} \nonumber \\
& = e^{(18x^2-1)z} \bigl ( 3x \cosh (6x \sqrt{9x^2-1}z) + \sqrt{9x^2-1} \sinh (6x \sqrt{9x^2-1}z)\bigr)
\end{align}
and
\begin{equation} \label{genf_c2}
c_2(x,z) = \sum_{n=0}^\infty C_{2n}(x) \frac{z^n}{n!} = e^{(18x^2-1)z}\cosh (6x \sqrt{9x^2-1}z).
\end{equation}

Connections between Bernoulli polynomials $B_n(x)$ and balancing polynomials $B_n^*(x)$ 
have been established in the recent papers \cite{RF-JIS,RF-TG}. They are interesting, 
as they instantly give relations between Bernoulli numbers and Fibonacci and Lucas numbers. 
The links are the following evaluations \cite{RF-AMS}
\begin{equation}\label{link1}
B_n^*\Big (\omega_s\frac{L_{s}}{6} \Big ) = \omega_s^{n-1}\frac{F_{sn}}{F_{s}}\,, \qquad C_n\Big (\omega_s\frac{L_{s}}{6} \Big  ) = \omega_s^{n}\frac{L_{sn}}{2},
\end{equation}
where $\omega_s=1$, if $s$ is even, and $\omega_s=i=\sqrt{-1}$, if $s$ is odd.
%In particularly,  
%\begin{gather}\label{link3}B_n^*\Big (\frac{1}{2} \Big ) = F_{2n}\,, \qquad B_n^*\Big (\frac{i}{2} \Big ) = i^{n-1}\frac{F_{3n}}{2}\,, \qquad C_n\Big (\frac{1}{2} \Big ) = \frac{L_{2n}}{2}, \qquad C_n\Big (\frac{i}{2} \Big ) = \frac{L_{3n}}{2}.
%\end{gather}
%Also, it follows from \cite{Castellanos,RF-TG-AMI} the relations
%\begin{gather}
%\label{link4}
%B_{2n}^*\Big (\frac{\sqrt{5}}{6} \Big ) = \sqrt{5} F_{2n}\,,
%\qquad C_{2n}\Big (\frac{\sqrt{5}}{6}\Bigr ) = \frac{L_{2n}}{2}.
%\end{gather}
These links will be used to prove our results. 

\section{Relations between Euler and Balancing \\(Lucas-Balancing) Polynomials}

We start with the following result involving even indexed balancing and  Lucas-balancing polynomials.
%%%%%%%%%%%%%%%% Theo 1
\begin{theorem}\label{thm1}
For each $n\geq 1$ and $x\in\mathbb{C}$, we have
%\begin{equation}\label{maineq1}
%\sum_{k=0}^n {n\choose k} C_{2k}(x)(12x\sqrt{9x^2-1})^{n-k} E_{n-k}(0) = C_{2n}(x) - \sqrt{9x^2-1}B_{2n}^*(x)
%\end{equation}
%or, equivalently, 
\begin{equation}\label{maineq1}
\sum_{k=1}^{\lfloor {n}/{2} \rfloor} {n-1\choose 2k-1} C_{2(n-2k)}(x)\big(144x^2(9x^2-1)\big)^{k}E_{2k-1}(0) = 12x(1-9x^2)B_{2n-2}^*(x).
\end{equation}
\end{theorem}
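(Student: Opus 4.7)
The plan is to recognize the left-hand side of \eqref{maineq1} as the coefficient of $z^{n-1}/(n-1)!$ in a product of two exponential generating functions, and then simplify that product in closed form using \eqref{genf_b2} and \eqref{genf_c2}.

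Setting $\alpha=\alpha(x)=6x\sqrt{9x^2-1}$, one has $(2\alpha)^2 = 144x^2(9x^2-1)$, which is precisely the numerical factor appearing in \eqref{maineq1}. The first step is to compute the EGF of the sequence $(E_{2j-1}(0))_{j\geq 1}$. Splitting \eqref{def_e} at $x=0$ into its odd part yields
$$\sum_{j\geq 0} E_{2j+1}(0)\,\frac{z^{2j+1}}{(2j+1)!} \;=\; \frac{1-e^z}{1+e^z} \;=\; -\tanh(z/2);$$
substituting $z\mapsto 2\alpha z$, multiplying by $-2\alpha$, and re-indexing $k=j+1$ then gives
$$-2\alpha\,\tanh(\alpha z) \;=\; \sum_{k\geq 1} E_{2k-1}(0)\,\bigl(144x^2(9x^2-1)\bigr)^{k}\,\frac{z^{2k-1}}{(2k-1)!}.$$

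Second, I would multiply this identity by $c_2(x,z)$ from \eqref{genf_c2}. Using $\cosh(\alpha z)\tanh(\alpha z)=\sinh(\alpha z)$ together with \eqref{genf_b2}, the generating function product collapses to
$$c_2(x,z)\cdot\bigl(-2\alpha\tanh(\alpha z)\bigr) \;=\; -2\alpha\,e^{(18x^2-1)z}\sinh(\alpha z) \;=\; 12x(1-9x^2)\,b_2(x,z).$$
Extracting the coefficient of $z^{n-1}/(n-1)!$ on both sides via the Cauchy product for EGFs then produces \eqref{maineq1}: on the left, only odd indices $\ell=2k-1$ contribute to the convolution, and $n-1-\ell\geq 0$ forces $1\leq k\leq\lfloor n/2\rfloor$ with $C_{2(n-1-\ell)}(x) = C_{2(n-2k)}(x)$; on the right, \eqref{genf_b2} returns exactly $12x(1-9x^2)B^*_{2n-2}(x)$.

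The main obstacle, as I see it, is spotting the correct pairing. Once one notices that $144x^2(9x^2-1)=(2\alpha)^2$ and that the odd values $E_{2k-1}(0)$ are generated by $-\tanh$, the identity $\cosh\cdot\tanh=\sinh$ converts $c_2(x,z)$ into a scalar multiple of $b_2(x,z)$ and the remainder is bookkeeping with the Cauchy product. The apparent presence of $\sqrt{9x^2-1}$ inside $\alpha$ is harmless, since only even powers of $\alpha$ survive on both sides, keeping the final identity polynomial in $x$.
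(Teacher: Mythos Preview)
Your proof is correct and follows essentially the same route as the paper: multiply $c_2(x,z)$ by (the odd part of) $I(0,12x\sqrt{9x^2-1}\,z)$, use $\cosh\cdot\tanh=\sinh$ to turn the result into a scalar multiple of $b_2(x,z)$, and compare coefficients. The only cosmetic difference is that the paper works with the full $I(0,\cdot)=1-\tanh(\cdot)$ and discards the even-index terms $E_{2k}(0)$ afterward, whereas you isolate the odd part and pre-scale by $2\alpha$ from the start, which makes the final coefficient extraction a bit cleaner.
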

%%%%%%%%%%%%%%%% 
\begin{proof}
Since $\tanh{z} = 1 - \frac{2}{e^{2z}+1}$, from \eqref{def_e} we get
\begin{equation*}
I(0,12x\sqrt{9x^2-1}z) = 1 - \tanh(6x\sqrt{9x^2-1}z)
\end{equation*}
and, by \eqref{genf_b2} and \eqref{genf_c2}, 
\begin{align*}
\sum_{n=0}^\infty \Big (\sum_{k=0}^{n-1} {n\choose k}& C_{2k}\bigl(12x\sqrt{9x^2-1})^{n-k} E_{n-k}(0)+C_{2n}(x) \Big)\frac{z^n}{n!} \\
&=  c_2(x,z) I(0,12x\sqrt{9x^2-1}z) \\
& = e^{(18x^2-1)z} \bigl(\cosh (6x \sqrt{9x^2-1}z) - \sinh (6x \sqrt{9x^2-1}z)\bigr) \\
& = c_2(x,z) - \sqrt{9x^2-1} b_2(x,z) \\
& = \sum_{n=0}^\infty \bigl(C_{2n}(x) - \sqrt{9x^2-1} B_{2n}^*(x)\bigr) \frac{z^n}{n!}.
\end{align*}

Thus, 
\begin{gather*}
\sum_{k=0}^n {n\choose k} C_{2(n-k)}\bigl(12x\sqrt{9x^2-1})^{k} E_{k}(0)= C_{2n}(x) - \sqrt{9x^2-1} B_{2n}^*(x).
\end{gather*}
Since $E_{2n-1}=0$ for $n\geq1$, after some algebra we have \eqref{maineq1}.
\end{proof}
\begin{corollary}\label{cor1}
For each $n\geq 1$ and $j\geq 1$,
\begin{equation}\label{eqcor1}
\sum_{k=0}^{\lfloor {n}/{2} \rfloor} {n-1\choose 2k-1}5^{k-1}F_{2j}^{2k-1}L_{2j(n-2k)} E_{2k-1}(0) = - F_{2j(n-1)}.
\end{equation}
%\begin{gather*}
%\sum_{k=0}^n{n\choose k} (\sqrt{5}F_{4j})^{n-k}L_{4kj}E_{n-k}(0)=L_{4jn}- \sqrt{5}F_{4jn},\\
%\sum_{k=0}^n{n\choose k} (\sqrt{5}F_{4j+1})^{n-k}L_{4kj+2k}E_{n-k}(0)=L_{4jn+2n}- \sqrt{5}F_{4jn+2n},
%\end{gather*}
%or, equivalently,
%\begin{gather*}
%\sum_{k=1}^{\lfloor\frac{n+1}{2}\rfloor}{n\choose 2k-1} (\sqrt{5}F_{4j})^{2k-1}L_{4j(n-2k+1)}E_{2k-1}(0)=-\sqrt{5}F_{4jn},\\
%\sum_{k=1}^{\lfloor \frac{n+1}{2}\rfloor}{n\choose 2k-1} (\sqrt{5}F_{4j+2})^{2k-1}L_{2(2j+1)(n-2k+1)}E_{2k-1}(0)=- \sqrt{5}F_{2n(2j+1)}.
%\end{gather*}
%
%\begin{equation}\label{eqcor1}
%\sum_{k=0}^n {n\choose k}(\sqrt{5}F_{2j})^{n-k}L_{2jk} E_{n-k}(0) = %\frac{1}{2} \bigl(L_{2jn} - \sqrt{5}F_{2jn}\bigr).
%\end{equation}
\end{corollary}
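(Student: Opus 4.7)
The plan is to specialize Theorem~\ref{thm1} at $x = \omega_j L_j/6$ and apply the evaluations \eqref{link1} to convert every polynomial term into a Fibonacci or Lucas number.

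First I would exploit the single arithmetic input $L_j^2 - 4(-1)^j = 5 F_j^2$, combined with $\omega_j^2 = (-1)^j$, to reduce the scalar pieces of Theorem~\ref{thm1}. A short calculation gives $9x^2 - 1 = \omega_j^2 \cdot 5 F_j^2/4$, hence $144 x^2 (9x^2-1) = 5 L_j^2 F_j^2$ (since $\omega_j^4 = 1$) and $12x(1-9x^2) = -\omega_j^3 \cdot 5 L_j F_j^2/2$, uniformly in the parity of $j$.

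Next I would insert the evaluations $C_{2(n-2k)}(x) = \omega_j^{2(n-2k)} L_{2j(n-2k)}/2$ and $B^*_{2n-2}(x) = \omega_j^{2n-3} F_{2j(n-1)}/F_j$ into \eqref{maineq1}. Crucially, $\omega_j^{2(n-2k)} = \omega_j^{2n}$ is independent of the summation index $k$, so it factors out of the left-hand sum; on the right side the combined power is $\omega_j^3 \cdot \omega_j^{2n-3} = \omega_j^{2n}$. These matching factors cancel, leaving an identity purely over the integers. A final rewrite using $F_{2j} = F_j L_j$, which turns $L_j^{2k-1} F_j^{2k-1}$ into $F_{2j}^{2k-1}$, together with dividing through by $\tfrac{5}{2}F_{2j}$, produces \eqref{eqcor1}. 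The lower index $k=0$ in the stated sum contributes nothing because $\binom{n-1}{-1} = 0$.

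The only real obstacle is the bookkeeping of the various $\omega_j$ powers; once one notices that they combine into the single factor $\omega_j^{2n}$ on both sides, the remainder of the proof reduces to routine algebra with the identities $L_j^2 - 4(-1)^j = 5 F_j^2$ and $F_{2j} = F_j L_j$.
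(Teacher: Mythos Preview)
Your argument is correct and follows precisely the route indicated in the paper: specialize Theorem~\ref{thm1} at $x=\omega_j L_j/6$, invoke the evaluations \eqref{link1}, and simplify using $L_j^2-5F_j^2=4(-1)^j$ together with $F_{2j}=F_jL_j$. Your careful tracking of the $\omega_j$ powers simply fills in the algebra that the paper leaves to the reader.
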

\begin{proof} 
Evaluate \eqref{maineq1} at the $x=\omega_jL_{j}/6$ and use the links from \eqref{link1}. 
To simplify recall that $L_n^2-5F_n^2=(-1)^n4$ and $F_{2n} = F_nL_n$.
\end{proof}

Using \eqref{E_n(0)}, we can write \eqref{eqcor1} as
\begin{equation*}
\sum_{k=0}^{\lfloor {n}/{2} \rfloor} {n-1\choose 2k-1}\frac{20^k-5^k}{k} F_{2j}^{2k-1} L_{2j(n-2k)} B_{2k} = 5F_{2j(n-1)},
\end{equation*}
which is easily reduced to \eqref{frogoy2}.

We also have the following interesting identity.
\begin{theorem} \label{thm2}
For each $n\geq 0$ and $x\in\mathbb{C}$, we have the relation
%\begin{equation} \label{maineq2}
%\sum_{k=0}^n {n\choose k} C_{2k}(x) (6x\sqrt{9x^2-1})^{n-k} E_{n-k} = \bigl(18x^2 - 1 \bigr)^{n}
%\end{equation}
%or, equivalently,
\begin{equation} \label{maineq2}
\sum_{k=0}^{\lfloor{n}/{2}\rfloor} {n\choose 2k} C_{2(n-2k)}(x) \big(36x^2({9x^2-1})\big)^{k} E_{2k} = \bigl(18x^2 - 1 \bigr)^{n}.
\end{equation}
\end{theorem}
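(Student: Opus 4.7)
The plan is to mirror the generating-function approach used in the proof of Theorem~\ref{thm1}, but now multiplying $c_2(x,z)$ by $\operatorname{sech}$ (rather than by $1-\tanh$) so that the hyperbolic cosine in \eqref{genf_c2} cancels cleanly and leaves the pure exponential $e^{(18x^2-1)z}$, whose coefficients are exactly the $(18x^2-1)^n$ appearing on the right-hand side of \eqref{maineq2}.

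First I would record the generating function for the Euler numbers. Combining \eqref{En(1/2)} with \eqref{def_e} evaluated at $x=1/2$ gives
\begin{equation*}
\sum_{n=0}^\infty E_n \frac{z^n}{n!} = I\!\left(\tfrac12,2z\right) = \frac{2e^{z}}{e^{2z}+1} = \frac{1}{\cosh z},
\end{equation*}
so after substituting $z \mapsto 6x\sqrt{9x^2-1}\,z$ and using $E_{2k+1}=0$ for $k\geq 0$, only even-indexed terms survive:
\begin{equation*}
\operatorname{sech}\bigl(6x\sqrt{9x^2-1}\,z\bigr) = \sum_{k=0}^\infty E_{2k} \bigl(36x^2(9x^2-1)\bigr)^{k} \frac{z^{2k}}{(2k)!}.
\end{equation*}

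Next I would multiply this series by the generating function $c_2(x,z)$ from \eqref{genf_c2}. On the closed-form side, the $\cosh(6x\sqrt{9x^2-1}\,z)$ factor in $c_2(x,z)$ cancels with the $\operatorname{sech}$, producing
\begin{equation*}
c_2(x,z)\,\operatorname{sech}\bigl(6x\sqrt{9x^2-1}\,z\bigr) = e^{(18x^2-1)z} = \sum_{n=0}^\infty (18x^2-1)^n \frac{z^n}{n!}.
\end{equation*}
On the coefficient side, I would expand the Cauchy product: writing $c_2(x,z) = \sum_m C_{2m}(x)z^m/m!$ and grouping terms of total $z$-degree $n = m+2k$ gives the coefficient
\begin{equation*}
\sum_{k=0}^{\lfloor n/2 \rfloor} \binom{n}{2k} C_{2(n-2k)}(x) \bigl(36x^2(9x^2-1)\bigr)^{k} E_{2k}
\end{equation*}
in front of $z^n/n!$. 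Comparing coefficients of $z^n/n!$ on both sides yields \eqref{maineq2}.

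There is no real obstacle beyond the bookkeeping of the Cauchy product; the main subtlety is recognizing that the $\cosh$ in \eqref{genf_c2} is the \emph{exact} factor that the Euler-number generating function inverts, which is what makes the right-hand side collapse to the single exponential. Once that observation is in place, the vanishing of odd Euler numbers reshapes the convolution into a sum indexed only by the even integers $2k$, matching the binomial coefficient $\binom{n}{2k}$ and the power $(36x^2(9x^2-1))^k$ in \eqref{maineq2}.
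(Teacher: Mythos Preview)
Your proof is correct and is exactly the approach taken in the paper: the paper's one-line proof is the functional identity $c_2(x,z)\,I(1/2,12x\sqrt{9x^2-1}\,z)=e^{(18x^2-1)z}$, and your $\operatorname{sech}\bigl(6x\sqrt{9x^2-1}\,z\bigr)$ is precisely $I(1/2,12x\sqrt{9x^2-1}\,z)$. You have simply spelled out the coefficient comparison that the paper leaves implicit.
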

\begin{proof}
The result is a consequence of the fact that
\begin{displaymath}
c_2(x,z) I(1/2,12x\sqrt{9x^2-1}z) = e^{(18x^2-1)z}.
\end{displaymath}
\end{proof}
\begin{corollary}\label{cor1-1}
	For each $n\geq 0$ and $j\geq 1$,
	%\begin{gather*}
	%\sum_{k=0}^n{n\choose k} \Bigl(\frac{\sqrt{5}F_{4j}}{2}\Bigr)^{n-k}L_{4kj}E_{n-k}=2\left(\frac{L_{4j}}{2}\right)^n,\\
	%\sum_{k=0}^n{n\choose k} \Bigl(\frac{\sqrt{5}F_{4j+2}}{2}\Bigr)^{n-k}L_{2k(2j+1)}E_{n-k}=2\left(\frac{L_{4j+2}}{2}\right)^n 
	%,\\
	%\sum_{k=0}^n{n\choose k} \Bigl(\frac{\sqrt{5}}{2}\Bigr)^{n-k}L_{2k}E_{n-k}=2\left(\frac{3}{2}\right)^n,
	%\end{gather*}
	%or, equivalently, 
\begin{equation}\label{interest}
\sum_{k=0}^{\lfloor n/2\rfloor}{n\choose 2k} \Bigl(\frac{5}{4}\Bigr)^{k}F^{2k}_{2j}L_{2j(n-2k)}E_{2k} = 2^{1-n} L_{2j}^n.
\end{equation}
\end{corollary}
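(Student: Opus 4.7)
The plan is to specialize Theorem \ref{thm2} at $x = \omega_j L_j/6$ and invoke the evaluation formulas \eqref{link1}, exactly in the spirit of the proof of Corollary \ref{cor1}. First I would compute the three polynomial quantities that appear in \eqref{maineq2}. Using $\omega_j^2 = \pm 1$ together with $L_j^2 - 5F_j^2 = (-1)^j 4$, one obtains
\begin{equation*}
36x^2 = \omega_j^2 L_j^2, \qquad 9x^2 - 1 = \omega_j^2\,\frac{5F_j^2}{4},
\end{equation*}
so that
\begin{equation*}
36x^2(9x^2-1) = (\omega_j^2)^2 \,\frac{5 F_j^2 L_j^2}{4} = \frac{5 F_{2j}^{2}}{4},
\end{equation*}
where I used $F_{2j} = F_j L_j$ and the fact that $(\omega_j^2)^2 = 1$ in both parity cases. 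The $k$-th factor $(36x^2(9x^2-1))^k$ therefore becomes $(5/4)^k F_{2j}^{2k}$, matching the corresponding term in \eqref{interest}.

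Next I would handle the Lucas-balancing factor and the right-hand side. From \eqref{link1},
\begin{equation*}
C_{2(n-2k)}\!\Big(\omega_j\frac{L_j}{6}\Big) = \omega_j^{2(n-2k)}\,\frac{L_{2j(n-2k)}}{2} = (\omega_j^2)^{n-2k}\,\frac{L_{2j(n-2k)}}{2},
\end{equation*}
and, using $L_{2j} = L_j^2 - 2(-1)^j$ case-by-case in $j$,
\begin{equation*}
18x^2 - 1 = \omega_j^2 \,\frac{L_{2j}}{2}, \qquad \text{hence}\qquad (18x^2-1)^n = (\omega_j^2)^n\,\frac{L_{2j}^n}{2^n}.
\end{equation*}

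Finally I would substitute these into \eqref{maineq2}. Pulling the factor $(\omega_j^2)^{n-2k} = (\omega_j^2)^n$ (since $(\omega_j^2)^{-2k} = 1$) out of the sum and canceling it against the same factor on the right, the statement reduces cleanly to
\begin{equation*}
\frac{1}{2}\sum_{k=0}^{\lfloor n/2\rfloor}\binom{n}{2k}\Big(\frac{5}{4}\Big)^k F_{2j}^{2k}L_{2j(n-2k)}E_{2k} = \frac{L_{2j}^n}{2^n},
\end{equation*}
which, after multiplying by $2$, is exactly \eqref{interest}. The only subtlety is the bookkeeping of the factors $\omega_j$ in the odd-$j$ case; the key observation that makes everything uniform is that only even powers of $\omega_j$ occur, so all phases collapse to $\pm 1$ and cancel globally. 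No other ingredients are needed.
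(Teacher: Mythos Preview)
Your proof is correct and follows exactly the paper's approach: specialize \eqref{maineq2} at $x=\omega_j L_j/6$, apply the links \eqref{link1}, and simplify using $L_j^2-5F_j^2=(-1)^j4$, $F_{2j}=F_jL_j$, and $L_j^2-L_{2j}=(-1)^j2$. You have simply written out the computations the paper leaves to the reader, including the careful tracking of the $\omega_j^2$ factors.
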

\begin{proof} 
Evaluate \eqref{maineq2} at the point $x=\omega_jL_{j}/6$ and use the links from \eqref{link1}. 
When simplifying you will also need the formula $L_n^2-L_{2n}=(-1)^{n}2$.
\end{proof}

Interestingly, if $j=1/2$ from \eqref{interest} we obtain Byrd's result \eqref{byrd1}. 
Also, when $j=1$ and $j=2$, from \eqref{interest} we obtain the following Lucas-Euler relations:
\begin{equation*}
\sum_{k=0}^{\lfloor n/2 \rfloor} {n\choose 2k} \Big(\frac{5}{4} \Big )^{k} L_{2(n-2k)} E_{2k} = 2\Big (\frac{3}{2}\Big )^n
\end{equation*}
and 
\begin{equation}\label{idxx}
\sum_{k=0}^{\lfloor n/2 \rfloor} {n\choose 2k}  \Big ( \frac{45}{4} \Big )^{k} L_{4(n-2k)} E_{2k} = 2\Big (\frac{7}{2}\Big )^n,
\end{equation}
respectively. The first example appears as equation (31) in \cite{RF-JIS}.

A different expression for the sum on the left of \eqref{maineq2} is stated next.
\begin{theorem} \label{thm3}
For each $n\geq 0$ and $x\in\mathbb{C}$, we have 
%\begin{equation} \label{maineq3}
%\sum_{k=0}^n {n\choose k} C_{2k}(x) (6x\sqrt{9x^2-1})^{n-k} E_{n-k} 
%= \sum_{k=0}^n {n\choose k} \big ( C_{2k}(x) - \sqrt{9x^2-1}B_{2k}^*(x)\big) (6x\sqrt{9x^2-1})^{n-k}
%\end{equation}
%or, equivalently,
\begin{align} \label{maineq3}
\sum_{k=0}^{\lfloor n/2\rfloor} {n\choose 2k} C_{2(n-2k)}(x) \big(36x^2(9x^2-1)\big)^{k} E_{2k}&\nonumber\\ 
= \sum_{k=0}^n {n\choose k} \big ( C_{2k}(x) - &\sqrt{9x^2-1}B_{2k}^*(x)\big) (6x\sqrt{9x^2-1})^{n-k}.
\end{align}
\end{theorem}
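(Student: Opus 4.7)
The plan is to show that both sides of \eqref{maineq3} equal $(18x^2-1)^n$. The left-hand side already does so by Theorem \ref{thm2}, so it suffices to evaluate the right-hand side by means of its exponential generating function and compare.

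Write $a=6x\sqrt{9x^2-1}$ for brevity, so that $(6x\sqrt{9x^2-1})^{n-k}=a^{n-k}$. The right-hand side is a Cauchy product, hence
$$\sum_{n=0}^\infty \bigg(\sum_{k=0}^n \binom{n}{k}\bigl(C_{2k}(x)-\sqrt{9x^2-1}\,B_{2k}^*(x)\bigr)\, a^{n-k}\bigg)\frac{z^n}{n!} = \bigl(c_2(x,z)-\sqrt{9x^2-1}\,b_2(x,z)\bigr)\,e^{az}.$$
Substituting the closed forms \eqref{genf_b2} and \eqref{genf_c2} for $b_2(x,z)$ and $c_2(x,z)$ (this is exactly the simplification that already appeared in the proof of Theorem \ref{thm1}) gives
$$c_2(x,z)-\sqrt{9x^2-1}\,b_2(x,z) = e^{(18x^2-1)z}\bigl(\cosh(az)-\sinh(az)\bigr) = e^{(18x^2-1)z}\,e^{-az}.$$
Multiplying by $e^{az}$ collapses the expression to $e^{(18x^2-1)z}$, and comparing the coefficient of $z^n/n!$ yields that the right-hand side of \eqref{maineq3} equals $(18x^2-1)^n$. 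Since Theorem \ref{thm2} identifies the left-hand side with the same quantity, the two agree.

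This argument is almost entirely bookkeeping, and in fact \eqref{maineq3} can be read as the statement that two different natural factorizations of the same exponential generating function $e^{(18x^2-1)z}$ yield matching coefficient identities. The only mildly delicate step is the identity $\cosh(az)-\sinh(az)=e^{-az}$, which is what cancels the $e^{az}$ and produces the closed-form exponential; beyond that, no new ingredient beyond the generating-function apparatus introduced in Section 2 is required.
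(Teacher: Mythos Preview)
Your proof is correct and uses essentially the same generating-function machinery as the paper. The only difference is organizational: the paper equates the two sides directly via the functional identity $I(1/2,2z)=e^{z}(1-\tanh z)$, which yields $c_2(x,z)\,I(1/2,12x\sqrt{9x^2-1}z)=e^{az}\bigl(c_2(x,z)-\sqrt{9x^2-1}\,b_2(x,z)\bigr)$ in one stroke, whereas you show each side equals $(18x^2-1)^n$ separately and invoke Theorem~\ref{thm2} for the left-hand side; the key simplification $\cosh(az)-\sinh(az)=e^{-az}$ is the same in both.
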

\begin{proof}
We use the identity $I(1/2,2z) = e^{z}(1 - \tanh z)$, from which the functional equation follows
\begin{displaymath}
c_2(x,z) I(1/2,12x\sqrt{9x^2-1}z) = e^{(6x\sqrt{9x^2-1})z}\big (c_2(x,z) - \sqrt{9x^2-1}b_2(x,z)\big ).
\end{displaymath}

Thus,
\begin{align*}
\sum_{k=0}^n {n\choose k} C_{2k}(x) (6x\sqrt{9x^2-1})^{n-k} E_{n-k}&\\ 
= \sum_{k=0}^n {n\choose k} &\big ( C_{2k}(x) - \sqrt{9x^2-1}B_{2k}^*(x)\big) (6x\sqrt{9x^2-1})^{n-k},
\end{align*}
that is equivalent to \eqref{maineq3}.
\end{proof}

\begin{theorem} \label{thm4}
For each $n\geq 0$ and $x\in\mathbb{C}$, it is true that 
\begin{equation} \label{maineq4}
\sum_{k=0}^n {n\choose k} C_{2(n-k)}(x) (12x\sqrt{9x^2-1})^{k} E_{k}(x) = \big ( 18x^2 - 1 + 6x(2x-1)\sqrt{9x^2-1}\big )^{n}.
\end{equation}
\end{theorem}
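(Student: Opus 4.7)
The approach mirrors the generating-function strategy used in the proofs of Theorems \ref{thm1}--\ref{thm3}: I will identify the left-hand side of \eqref{maineq4} as the coefficient of $z^n/n!$ in a product of exponential generating functions, simplify that product to a single exponential, and then read off the coefficient.

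Set $u = 6x\sqrt{9x^2-1}\,z$. Since $I(x,z)$ is the exponential generating function of $E_n(x)$, the rescaled series
\[
I\bigl(x, 12x\sqrt{9x^2-1}\,z\bigr) = \sum_{k=0}^\infty E_k(x)\bigl(12x\sqrt{9x^2-1}\bigr)^k \frac{z^k}{k!},
\]
when multiplied with $c_2(x,z) = \sum_{n=0}^\infty C_{2n}(x)\,z^n/n!$ from \eqref{genf_c2}, yields
\[
c_2(x,z)\, I\bigl(x, 12x\sqrt{9x^2-1}\,z\bigr) = \sum_{n=0}^\infty \left(\sum_{k=0}^n \binom{n}{k} C_{2(n-k)}(x)\bigl(12x\sqrt{9x^2-1}\bigr)^k E_k(x)\right)\frac{z^n}{n!},
\]
so the left-hand side of \eqref{maineq4} is exactly the $n$-th exponential coefficient of this product.

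The central step is to show that this product collapses to a single exponential. Using $c_2(x,z) = e^{(18x^2-1)z}\cdot\frac{e^{u}+e^{-u}}{2}$ from \eqref{genf_c2} together with the defining formula $I(x,2u) = \frac{2e^{2xu}}{e^{2u}+1}$ from \eqref{def_e}, one finds
\[
c_2(x,z)\, I\bigl(x,2u\bigr) = e^{(18x^2-1)z}\cdot\frac{(e^u+e^{-u})\,e^{2xu}}{e^{2u}+1}.
\]
The key algebraic simplification is the factorization $e^{2u}+1 = e^u(e^u+e^{-u})$, which reduces the fraction to $e^{-u}\cdot e^{2xu} = e^{(2x-1)u}$. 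Substituting back $u = 6x\sqrt{9x^2-1}\,z$ gives
\[
c_2(x,z)\, I\bigl(x,12x\sqrt{9x^2-1}\,z\bigr) = \exp\!\left(\bigl[(18x^2-1) + 6x(2x-1)\sqrt{9x^2-1}\bigr]z\right).
\]

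Extracting the coefficient of $z^n/n!$ from the right-hand side then yields $\bigl(18x^2-1+6x(2x-1)\sqrt{9x^2-1}\bigr)^n$, which matches \eqref{maineq4}. The proof is therefore purely mechanical once the functional equation is set up; the only nontrivial step is spotting the identity $\frac{e^u+e^{-u}}{e^{2u}+1}=e^{-u}$ that triggers the collapse, and this is the analogue of the simplification already used for Theorems \ref{thm2} and \ref{thm3}.
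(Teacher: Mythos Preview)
Your proof is correct and follows essentially the same approach as the paper: the paper invokes the functional relation $\cosh(z/2)\,I(x,z)=e^{(x-1/2)z}$ (of which your identity $\frac{e^u+e^{-u}}{e^{2u}+1}=e^{-u}$ is just the explicit verification) and then reads off the coefficient of $z^n/n!$ in $c_2(x,z)\,I(x,12x\sqrt{9x^2-1}\,z)$, exactly as you do.
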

\begin{proof}
The functional relation ${\cosh(z/2)} I(x,z) = e^{(x-1/2)z}$ produces immediately
\begin{displaymath}
c_2(x,z)I(x,12x\sqrt{9x^2-1}z) = e^{(18x^2 - 1 + 6x(2x-1)\sqrt{9x^2-1})z}.
\end{displaymath}
Comparing the coefficients of $z$ in the power series expansions on both sides gives the identity.
\end{proof}

When $x=1/2$, then we recover \eqref{idxx}, by \eqref{En(1/2)}. 

\section{Other Special Polynomial Identities}

The following result appears as Theorem 13 in \cite{RF-TG}:
For each $n\geq 0$, $j\geq 1$, and $x\in\mathbb{C}$, we have 
\begin{equation*}\label{FBpol1}
\sum_{k=0}^n {n\choose k} F_{jk} (\sqrt{5}F_j)^{n-k} B_{n-k}(x) = n F_j \big ((\sqrt{5}x+\beta)F_j + F_{j-1}\big )^{n-1}
\end{equation*}
and
\begin{equation*}\label{FBpol2}
\sum_{k=0}^n {n\choose k} F_{jk} (-\sqrt{5}F_j)^{n-k} B_{n-k}(x) = n F_j \big ((\alpha - \sqrt{5}x)F_j + F_{j-1}\big )^{n-1},
\end{equation*}
where $\alpha=(1+\sqrt{5})/2$ is the golden ratio and $\beta=(1-\sqrt{5})/2=-1/\alpha$. 

Now, we present the analogue result for the Lucas-Euler pair:
\begin{theorem} \label{thm5} 
The following polynomial identity is valid for all $n\geq 0$, $j\geq 1$, and $x\in\mathbb{C}$:
\begin{equation}\label{main51}
\sum_{k=0}^n {n\choose k} L_{jk} (\sqrt{5}F_j)^{n-k} E_{n-k}(x) = 2 \big ((\sqrt{5}x+\beta)F_j + F_{j-1}\big )^{n}
\end{equation}
and
\begin{equation}\label{main52}
\sum_{k=0}^n {n\choose k} L_{jk} (-\sqrt{5}F_j)^{n-k} E_{n-k}(x) = 2 \big ((\alpha - \sqrt{5}x)F_j + F_{j-1}\big )^{n}.
\end{equation}
\end{theorem}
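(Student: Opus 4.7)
The plan is to mimic the generating-function approach used throughout the paper: assemble the left-hand side as a Cauchy product of two exponential generating functions and then simplify using Binet's formulas for Fibonacci and Lucas numbers. Since the $n$-th coefficient of the LHS in \eqref{main51} (divided by $n!$) is a convolution, its exponential generating function is
\[
\left(\sum_{n=0}^\infty L_{jn}\frac{z^n}{n!}\right) \cdot I(x,\sqrt{5}F_j\,z).
\]
Using the Binet-type identity $L_{jn} = \alpha^{jn} + \beta^{jn}$ the first factor equals $e^{\alpha^j z} + e^{\beta^j z}$, and from $\sqrt{5}F_j = \alpha^j - \beta^j$ the definition \eqref{def_e} yields
\[
I(x,\sqrt{5}F_j\,z) = \frac{2\,e^{x(\alpha^j-\beta^j)z}}{e^{(\alpha^j-\beta^j)z}+1}.
\]

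The key step is the factorization $e^{\alpha^j z} + e^{\beta^j z} = e^{\beta^j z}\bigl(e^{(\alpha^j-\beta^j)z}+1\bigr)$, which makes the denominator of $I$ cancel. The generating function of the LHS collapses to
\[
2\,e^{\bigl(\beta^j + x(\alpha^j-\beta^j)\bigr)z}.
\]
Invoking the standard Binet expansion $\beta^j = \beta F_j + F_{j-1}$ (and again $\alpha^j - \beta^j = \sqrt{5}F_j$), the exponent rewrites as $(\sqrt{5}x+\beta)F_j + F_{j-1}$. Reading off the coefficient of $z^n/n!$ gives \eqref{main51}. I expect no real obstacle here; the only bookkeeping is making sure the denominator-cancelling factorization is visible.

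For \eqref{main52} the cleanest route is to apply \eqref{main51} to the argument $1-x$ and then use the standard reflection formula $E_n(1-x) = (-1)^n E_n(x)$ for Euler polynomials, which turns $(\sqrt{5}F_j)^{n-k}E_{n-k}(1-x)$ into $(-\sqrt{5}F_j)^{n-k}E_{n-k}(x)$ on the left. On the right, the exponent becomes $(\sqrt{5}(1-x)+\beta)F_j + F_{j-1}$, and the numerical identity $\sqrt{5}+\beta = \alpha$ (immediate from $\alpha = (1+\sqrt{5})/2$, $\beta = (1-\sqrt{5})/2$) converts this to $(\alpha-\sqrt{5}x)F_j + F_{j-1}$, giving \eqref{main52}. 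Alternatively, one can redo the generating-function argument directly with $-\sqrt{5}F_j$ in place of $\sqrt{5}F_j$, rewriting
\[
I(x,-\sqrt{5}F_j z) = \frac{2\,e^{(1-x)\sqrt{5}F_j z}}{e^{\sqrt{5}F_j z}+1}
\]
and performing the same cancellation against $e^{\alpha^j z}+e^{\beta^j z}$, but the symmetry shortcut is shorter. Either way, the whole proof reduces to one generating-function identity plus elementary Binet manipulations.
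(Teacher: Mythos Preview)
Your proof is correct and follows essentially the same generating-function approach as the paper: both multiply the exponential generating function of $(L_{jn})_{n\ge 0}$ by $I(x,\sqrt{5}F_j z)$, cancel to a single exponential, and then obtain \eqref{main52} from \eqref{main51} via the reflection $x\mapsto 1-x$ and $E_n(1-x)=(-1)^nE_n(x)$. The only cosmetic difference is that the paper writes $L(z)=2e^{(F_j/2+F_{j-1})z}\cosh(\tfrac{\sqrt{5}F_j}{2}z)$ and appeals to $\cosh(z/2)I(x,z)=e^{(x-1/2)z}$, whereas you write $L(z)=e^{\alpha^j z}+e^{\beta^j z}$ and cancel the factor $e^{(\alpha^j-\beta^j)z}+1$ directly; these are the same cancellation in different notation.
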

\begin{proof}
Let $L(z)$ be the exponential generating function for $(L_{jn})_{n\geq 0},j\geq 1$. Then, using the Binet formula for $L_{n}$ we get
\begin{equation*}
L(z) = 2 e^{(1/2 F_j + F_{j-1})z}\cosh\Big ( \frac{\sqrt{5}F_j}{2}  z\Big ).
\end{equation*}
Thus, it follows that
\begin{align*}
\sum_{n=0}^\infty \Big (\sum_{k=0}^n {n\choose k} L_{jk} (\sqrt{5}F_j)^{n-k} E_{n-k}(x) \Big ) \frac{z^n}{n!} & =
L(z) I(x,\sqrt{5}F_j z) \\
& = 2 \, e^{((x-1/2)\sqrt{5}F_j + 1/2 F_j + F_{j-1})z} \\
& = 2 \, e^{((\sqrt{5}x + \beta)F_j + F_{j-1})z}.
\end{align*}
This proves the first equation. The second follows upon replacing $x$ by $1-x$
and using $E_n(1-x)=(-1)^nE_n(x)$ and $\alpha-\beta=\sqrt{5}$.
\end{proof}

Note that the relations \eqref{main51} and \eqref{main52} provide a generalization of \eqref{interest}. 
To see this, notice that they can be written more compactly as
\begin{equation}\label{main53}
\sum_{k=0}^n {n\choose k} L_{jk} (\pm\sqrt{5}F_j)^{n-k} E_{n-k}(x) = 2^{1-n} \big (L_j\pm \sqrt{5}F_j(2x-1)\big)^{n}.
\end{equation}
Now, if $x=1/2$, we get 
\begin{equation*}
\sum_{k=0}^n {n\choose k}\big(\pm\sqrt{5}F_{j}\big)^{n-k} 2^k L_{jk} E_{n-k} =2L_j^n,
\end{equation*}
which is equivalent to \eqref{interest}. We also mention the nice and curious identities
\begin{equation*}
\sum_{k=0}^n {n\choose k}\big(\pm\sqrt{5}F_{j}\big)^{n-k} L_{jk} E_{n-k}(\alpha) =2 (\pm 1)^n L_{j\pm 1}^n,
\end{equation*}
and
\begin{equation*}
\sum_{k=0}^n {n\choose k}\big(\pm\sqrt{5}F_{j}\big)^{n-k} L_{jk} E_{n-k}(\beta) =2 (\mp 1)^n L_{j\mp 1}^n,
\end{equation*}
which can be deduced from \eqref{main53} and $5F_n=L_{n+1}+L_{n-1}$.

We conclude this presentation with the following interesting corollary.

\begin{corollary}\label{cor_main5}
Let $n$, $j$ and $q$ be integers with $n$, $j\geq 1$ and $q$ odd. Then it holds that
\begin{equation*} \label{eqcor_main5}
\sum_{k=0}^{n} {n\choose k}(\sqrt{5}F_j )^{n-k} \big ( q^{-(n-k)} - 1 \big ) L_{jk} E_{n-k}(0) =
2 q^{-n} \sum_{r=1}^{q-1} (-1)^r \big ( r\alpha^j + (q - r)\beta^j \big )^{n}.
\end{equation*}
\end{corollary}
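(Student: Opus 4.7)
The plan is to apply Theorem \ref{thm5}, specifically equation \eqref{main51}, at the special values $x=r/q$ for $r=0,1,\ldots,q-1$ and then combine the resulting identities via the multiplication theorem for Euler polynomials. The first observation is that the standard Binet identities $\beta^j = \beta F_j + F_{j-1}$ and $\alpha^j - \beta^j = \sqrt{5}F_j$ give
\[
\bigl(\sqrt{5}\,r/q + \beta\bigr)F_j + F_{j-1} \;=\; r\sqrt{5}F_j/q + \beta^j \;=\; q^{-1}\bigl(r\alpha^j + (q-r)\beta^j\bigr).
\]
Substituting $x=r/q$ into \eqref{main51} and raising to the $n$th power therefore yields
\[
\sum_{k=0}^{n}\binom{n}{k}(\sqrt{5}F_j)^{n-k} L_{jk}\,E_{n-k}(r/q) \;=\; 2\,q^{-n}\bigl(r\alpha^j + (q-r)\beta^j\bigr)^n.
\]

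Next I would multiply both sides by $(-1)^r$, sum over $r=0,1,\ldots,q-1$, and interchange the two summations on the left. The key ingredient is Raabe's multiplication formula for Euler polynomials, valid for odd $q$,
\[
E_m(qx) \;=\; q^m \sum_{r=0}^{q-1}(-1)^r E_m\!\left(x + \frac{r}{q}\right),
\]
which at $x=0$ reduces to $\sum_{r=0}^{q-1}(-1)^r E_m(r/q) = q^{-m}E_m(0)$. Applying this to the inner sum on the left produces
\[
\sum_{k=0}^{n}\binom{n}{k}(\sqrt{5}F_j)^{n-k} q^{-(n-k)} L_{jk}\,E_{n-k}(0) \;=\; 2\,q^{-n}\sum_{r=0}^{q-1}(-1)^r\bigl(r\alpha^j + (q-r)\beta^j\bigr)^n.
\]

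To finish, I would isolate the $r=0$ contribution on the right, which is $2\,q^{-n}\cdot q^n\beta^{jn}=2\beta^{jn}$, and note that by \eqref{main51} evaluated at $x=0$ this is exactly the value of $\sum_{k=0}^{n}\binom{n}{k}(\sqrt{5}F_j)^{n-k}L_{jk}E_{n-k}(0)$. Subtracting this common quantity from both sides converts the coefficient $q^{-(n-k)}$ into $q^{-(n-k)}-1$ on the left and restricts the sum on the right to $r=1,\ldots,q-1$, giving precisely the claimed identity. The only delicate point, which I expect to be the main obstacle, is invoking Raabe's formula in the correct form and keeping the alternating sign consistent; the odd-$q$ hypothesis is essential here, since only then does $(-x)^q=-x^q$ produce the telescoping needed for the multiplication theorem.
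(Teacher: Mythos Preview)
Your proof is correct and follows essentially the same route as the paper: both apply \eqref{main51} at $x=r/q$, use the odd-$q$ multiplication formula for Euler polynomials at $x=0$, and simplify via $\beta F_j+F_{j-1}=\beta^j$ and $\sqrt{5}F_j=\alpha^j-\beta^j$. The only cosmetic difference is that the paper first derives $\sum_{r=1}^{q-1}(-1)^rE_m(r/q)=(q^{-m}-1)E_m(0)$ and substitutes it directly, whereas you sum over $r=0,\dots,q-1$ and then subtract the $r=0$ term; the content is identical.
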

\begin{proof}  
The known multiplication formula for Euler polynomials for odd $q$ \cite[Chapter 24]{Dilcher}
\begin{equation*}
q^{n} \sum_{r=0}^{q-1} (-1)^r E_n \Big (x + \frac{r}{q}\Big ) = E_n(qx)
\end{equation*}
yields
\begin{equation*}
\sum_{r=1}^{q-1} (-1)^r E_n \Big (\frac{r}{q}\Big ) = \big ( q^{-n} - 1 \big ) E_{n}(0).
\end{equation*}

Therefore, 
\begin{align*}
\sum_{k=0}^{n} {n\choose k} L_{jk} (\sqrt{5}F_j)^{n-k} \big ( q^{-(n-k)} - 1 \big ) E_{n-k}(0) & = 2 \sum_{r=1}^{q-1} (-1)^r
\Big ( \Big ( \sqrt{5}\,\frac{r}{q}+\beta\Big ) F_j + F_{j-1}\Big )^{n} \\
& = 2 q^{-n} \sum_{r=1}^{q-1} (-1)^r \big ( \sqrt{5}r F_j + q(\beta F_j + F_{j-1})\big )^{n} \\
& = 2 q^{-n} \sum_{r=1}^{q-1} (-1)^r \big ( r\alpha^j + (q - r)\beta^j \big )^{n}.
\end{align*}
\end{proof}

The special instances for $j=1$, and $q=3$ and $q=5$, respectively, take the form
\begin{equation*}
\sum_{k=0}^{n} {n\choose k} L_{k} (\sqrt{5})^{n-k} \big ( 3^{-(n-k)} - 1 \big ) E_{n-k}(0)
= 2\cdot 3^{-n} \sqrt{5} F_{2n}
\end{equation*}
and
\begin{equation*}
\sum_{k=0}^{n} {n\choose k} L_{k} (\sqrt{5})^{n-k} \big ( 5^{-(n-k)} - 1 \big ) E_{n-k}(0)
= \left.
  \begin{cases}
    2\cdot 5^{(1-n)/2} (F_{2n} - F_{n}), & \mbox{if $n$ is even}; \\
    2\cdot 5^{-n/2} (L_{2n} - L_{n}), & \mbox{if $n$ is odd}.
  \end{cases}
  \right.
\end{equation*}

\section{Conclusion}
In this paper, we have documented identities relating Euler numbers (polynomials) to
balancing and Lucas-balancing polynomials. We have also derived a general identity involving
Euler polynomials and Lucas numbers in arithmetic progression. 
All results must be seen as companion results to the 
Fibonacci-Bernoulli pair from \cite{RF-TG}. In the future, we will work
on more identities connecting Bernoulli/Euler numbers (polynomials) with Fibonacci/Lucas numbers (polynomials).

%
%\noindent
%Return to
%\htmladdnormallink{Journal of Integer Sequences home page}{http://www.cs.uwaterloo.ca/journals/JIS/}.
%\vskip .1in


\begin{thebibliography}{99}

\bibitem{Byrd}
P.~F. Byrd, Relations between Euler and Lucas numbers, {\it Fibonacci Quart.} {\bf 13} (1975), 111--114. 

\bibitem{Castellanos}
D.~Castellanos, A generalization of Binet's formula and some of its consequences, {\it Fibonacci Quart.} {\bf 27}(5) (1989), 424--438. 

%\bibitem{Cohen}
%H.~Cohen, \textit{Number Theory. Vol. II: Analytic and Modern Tools}, Springer, 2007.

\bibitem{Dilcher}
K.~Dilcher, Bernoulli and Euler Polynomials, in: F.~W.~J. Olver, D.~M. Lozier, R.~F. Boisvert, C.~W. Clark (eds.), NIST Handbook of Mathematical Functions, Cambridge University Press, 2010.

\bibitem{RF-AMS}
R.~Frontczak, On balancing polynomials, {\it Appl. Math. Sci.} {\bf 13} (2019), 57--66.

\bibitem{RF-JIS}
R.~Frontczak, Relating Fibonacci numbers to Bernoulli numbers via balancing polynomials, 
{\it J. Integer Seq.} {\bf 22} (2019), Article 19.5.3.

\bibitem{RF-TG-AMI}
R.~Frontczak and T.~Goy, Additional close links between balancing and Lucas-balancing polynomials, (2020), under review. 
Available at \url{https://arxiv.org/abs/2007.14048}.

\bibitem{RF-TG}
R.~Frontczak and T.~Goy, More Fibonacci-Bernoulli relations with and without balancing polynomials, (2020), under review. 
Available at \url{https://arxiv.org/abs/2007.14618v1}.

\bibitem{RF-Tomovski}
R.~Frontczak and Z.~Tomovski, Generalized Euler-Genocchi polynomials and Lucas numbers, {\it Integers} {\bf 20} (2020), \#A52.

\bibitem{Kelisky}
R.~P. Kelisky, On formulas involving both the Bernoulli and Fibonacci numbers, \textit{Scripta Math.} {\bf 23} (1957), 27--32.

\bibitem{Kim}
D.~S. Kim and T.~Kim, On sums of finite products of balancing polynomials, \textit{J. Comput. Appl. Math.} {\bf 377} (2020), 112913.

\bibitem{Meng}
Y.~Meng, A new identity involving balancing polynomials and balancing numbers, \textit{Symmetry} {\bf 11} (2019), 1141.

\bibitem{Ozdemir}
G.~Ozdemir and Y.~Simsek, Identities and relations associated with Lucas and some special sequences, {\it AIP Conference Proceedings 1863 300003} (2017).

\bibitem{Sloane}
N.~J.~A. Sloane, editor, The On-Line Encyclopedia of Integer Sequences, 2020. Available at \url{https://oeis.org}.

\bibitem{Wang}
T.~Wang and Z.~Zhang, Recurrence sequences and N\"orlund-Euler polynomials, {\it Fibonacci Quart.} {\bf 34} (4) (1996), 314--319.

\bibitem{Young}
P.~T.~Young, Congruences for Bernoulli-Lucas sums, {\it Fibonacci Quart.} {\bf 55} (5) (2017), 201--212.

\bibitem{Zhang-Guo}
Z. Zhang and L.~Guo, Recurrence sequences and Bernoulli polynomials of higher order, {\it Fibonacci Quart.} {\bf 33} (3) (1995), 359--362.

\bibitem{Zhang}
T.~Zhang and Y.~Ma, On generalized Fibonacci polynomials and Bernoulli numbers, {\it J.~Integer Seq.}
{\bf 8} (2005), Article 05.5.3.


\end{thebibliography}
\end{document}